\documentclass[letterpaper, 10 pt, conference]{ieeeconf}

\IEEEoverridecommandlockouts 
\usepackage{graphicx} 
\usepackage{cite}
\usepackage{amsmath,amssymb,amsfonts}
\usepackage{url}
\usepackage{color, soul}
\usepackage{hyperref}
\usepackage{graphicx}
\usepackage{caption}
\usepackage{subcaption}
\usepackage{soul}
\usepackage{optidef}
\usepackage{listings}
\usepackage{amstext}
\usepackage{verbatim}
\usepackage{indentfirst}
\usepackage[utf8]{inputenc}
\usepackage{caption}
\usepackage{subcaption}
\usepackage{algorithm}
\usepackage{framed}
\usepackage{rotating}
\usepackage{cite}
\usepackage{optidef}
\usepackage{graphicx}
\usepackage{booktabs}
\usepackage{algpseudocode}
\algrenewcommand\algorithmicrequire{\textbf{Data:}}
\algrenewcommand\algorithmicensure{\textbf{Return:}}

\newcommand{\real}{\mathbb{R}}
\newcommand\abf{\mathbf{a}}
\newcommand\bbf{\mathbf{b}}

\newcommand\gbf{\mathbf{g}}
\newcommand\ibf{\mathbf{i}}

\newcommand\pbf{\mathbf{p}}

\newcommand\rbf{\mathbf{r}}

\newcommand\vbf{\mathbf{v}}

\newcommand\xbf{\mathbf{x}}
\newcommand\ybf{\mathbf{y}}
\newcommand\zbf{\mathbf{z}}

\newcommand\lambdabf{\boldsymbol{\lambda}}

\newtheorem{theorem}{Theorem}
\newtheorem{lemma}{Lemma}
\newtheorem{definition}{Definition}

\newtheorem{proposition}{Proposition}

\newtheorem{remark}{Remark}

\graphicspath{{/IMG/}} 

\begin{document}
\title{\LARGE \bf  Learning-Based Stackelberg Equilibrium Seeking with Application to Demand-Side Energy Management}

\author{Silvia Cianchi, Reza Rahimi Baghbadorani, Anibal Sanjab, Sergio Grammatico \thanks{ Silvia Cianchi is with Delft University of Technology, and VITO and EnergyVille  \href{mailto:s.cianchi@tudelft.nl}{\texttt{s.cianchi@tudelft.nl}}. Reza Rahimi Baghbadorani is with Erasmus University \href{mailto:rahimibaghbadorani@rsm.nl}{\texttt{rahimibaghbadorani@rsm.nl}}. Anibal Sanjab is with VITO and EnergyVille \href{mailto:anibal.sanjab@vito.be}{\texttt{anibal.sanjab@vito.be}}. Sergio Grammatico is with Delft University of Technology \href{mailto:s.grammatico@tudelft.nl}{\texttt{s.grammatico@tudelft.nl}}.}}
\maketitle

\begin{abstract}
Demand-side management (DSM) enables distribution system operators (DSOs) to steer electricity consumption through dynamic price signals or incentive mechanisms, thereby leveraging end-users' flexibility potential for delivering grid services. The resulting hierarchical interaction between the DSO and the end-users can be formulated as a Stackelberg game, where the operator dynamically sets the prices and the end-users optimally respond to them. Efficiently designing these price signals is challenging, as the users’ response models are unknown or difficult to estimate. In this paper, we propose a learning-based zeroth-order algorithm for incentive design, in which the iterative update of the incentive signals is efficiently assisted by a data-driven online estimation of the users' responses. The proposed method is then proven to converge to an equilibrium tariff while allowing the DSO to estimate the decision-making problems at the user level. Moreover, the method preserves users' privacy, as the update rule of the DSO is solely based on observations of communicated end-user actions. Numerical simulations employing real-world data illustrate the efficient convergence of our learning-based proposed method, while significantly reducing the number of required interactions between the DSO and the end-users with respect to the state-of-the-art approach.
\end{abstract}

\section{Introduction}
The increasing penetration of distributed energy resources and storage enables end-users to dynamically generate, store, and actively manage their electricity consumption. This inherent flexibility forms the basis for demand-side management (DSM) \cite{albadi2008summary}, whereby distribution system operators (DSOs) design price signals and incentive mechanisms to unlock this flexibility, thus ensuring grid services needs are met while delivering benefits to end-users. The interaction between DSOs and end-users can be modeled as a hierarchical decision-making process, in which DSOs dynamically set price signals to ensure reliable operation of the grid and recuperate their investment costs, and 
end-users, in turn, react to these imposed tariffs and accordingly schedule their energy demand in order to optimize their objective (e.g., minimizing costs and discomfort) \cite{cianchi2025two}. 
This interconnected setting is typically modeled as a Stackelberg game \cite{von2010market}, with the DSO being the leader (solving the upper-level problem, setting the price signals) and the end users as followers (solving the lower level-problem, capturing their optimal reaction to the communicated price signal). The key challenge in solving a Stackelberg game lies in how the leader accounts for the reaction of the followers. In fact, if the DSO has enough information about the decision-making process at the lower level, then it can anticipate/predict the reactions of the end-users to centrally solve the problem and set the tariffs. However, in practice, such information is not readily available (posing a setting of incomplete information), thereby requiring the DSO to iteratively communicate with the end-users, to converge to an equilibrium price setting. A challenge, therein, however, is that an increasing number of iterations between the DSO and end-users to reach convergence limits the practical deployment of such techniques. In this paper, we propose a methodology which allows a DSO to \emph{efficiently} design an effective tariff scheme by iteratively querying the end-users and learning their decision-making process.

\smallskip
\noindent
\emph{Related Work.}
In recent years, DSM mechanisms have been widely investigated in the literature. The works in \cite{9993196, askeland2023stochastic, koolen2017machine, guo2019drivers} assume that the DSO has access to a model of the decision-making process of the end-users, which can be either given \cite{9993196, askeland2023stochastic}, or approximated offline based on historical data \cite{koolen2017machine, guo2019drivers}. In these cases, the DSO can embed the end-users' reactions in its optimization problem and centrally solve a bilevel optimization problem, typically reformulated as 
a Mathematical Programming with Equilibrium Constraints (MPEC) \cite{luo1996mathematical}. This approach exhibits several limitations, as it heavily relies on information about end-users' behavior, which is often unavailable and uncertain. As such, errors in the lower-level model can propagate into the upper-level decision of the DSO, and cannot be corrected online.
Moreover, this approach suffers from a computational burden, as MPECs are often intractable, especially for realistic distribution network models \cite{luo1996mathematical}. To solve these issues, works such as \cite{zhang2013robust, nguyen2017optimal, cianchi2025two, 10886163, alahmed2024network} develop iterative algorithms, either based on dual decomposition schemes \cite{zhang2013robust, nguyen2017optimal, cianchi2025two} or on problem-specific solutions \cite{10886163, alahmed2024network}, in which the DSO repeatedly updates the network tariffs after collecting local information from the end-users. These approaches work for rather specific formulations, such as under the assumption that the leader’s objective is the sum of the users’ objectives plus a global convex term, which makes the problem separable and convex. 
Moreover, the first-order iterative approach proposed in \cite{grontas2024big} suffers from an information burden, as the end-users are supposed to repeatedly communicate their responses to the tariffs, as well as the Jacobian of the responses, which might violate privacy requirements, and induces additional complexity at the users' end. This limitation is successfully overcome in \cite{maheshwari2024follower}, where the leader, by probing the followers with different incentives, computes a zeroth-order estimation of its gradient. However, this technique requires a significant number of interactions of the DSO with the end-users, which may hinder its practical efficiency. Finally, reinforcement learning and bandit methods have been applied to design DSM mechanisms \cite{hutchinson2024safe, lu2018dynamic}, where the DSO interacts with end-users to learn price strategies without requiring detailed models of their decision making processes. However, these schemes typically need a large amount of training data, and rely on exploration, which may induce significant variability in the iterates, potentially leading to disruptive and undesirable price changes. 

\smallskip
\noindent
\emph{Paper Contributions.}
In this paper, we propose a learning-based approach for dynamic tariff design that significantly improves efficiency by enabling the DSO to leverage a learned model of end-users' behavior. Our method builds on the zeroth-order method proposed in \cite{maheshwari2024follower}, which, by iteratively querying the end-users, thereby preserving their privacy, computes stable updates, but converges slowly. By efficiently integrating the learned model of the end users' reaction, our proposed framework retains the advantages of the zeroth-order updates, while significantly reducing the number of required interactions. Firstly, we provide a revisited version of the zeroth-order method proposed in \cite{maheshwari2024follower} to extend it to our particular DSM problem, that is nonsmooth. In this regard, we provide convergence guarantees to a stationary point of the Stackelberg game as well as an explicit convergence bound. Secondly, we provide a novel framework to efficiently integrate the learned-model of the end-users' reactions with the zeroth-order updates. Finally, we corroborate the effectiveness of our proposed approach through simulations based on real-world data, illustrating the performance in realistic operating conditions. In our numerical experience, our proposed learning-based approach is several orders of magnitude faster than the standard zeroth-order methods, thus significantly reducing the number of interactions between the DSO and the end-users, which is key in practical implementations where the frequency of the communications between the operator and distributed agents is practically constrained.

\smallskip
\noindent
\emph{Paper Organization.}
The paper is organized as follows: Section \ref{Formulation} introduces the problem formulation and the solution concept. Section \ref{Methodology} presents the proposed methodology. Section \ref{NumRes} presents the simulation results, and Section \ref{Conclusion} concludes the paper.

\smallskip
\noindent
\emph{Notation.}
We let $\xbf=[x_1, ... , x_n] \in \real^n$ denote a vector, and $x_t$ be its $t$-th component. We express as $\boldsymbol{0}_n$ and $ \mathbf{1}_n$ vectors with all zero and all one elements, respectively. The operator $\mathrm{col}(b_1,\dots,b_n)$ indicates the column vector obtained by stacking the elements $b_1,\dots,b_n$. Furthermore, we denote by $\mathrm{proj}_C(\xbf):= \underset{{ \ybf \in C }}{\mathrm{argmin}} ||\xbf - \ybf||^2 $ the Euclidean projection operator onto a convex set $C$, and by $\mathcal{B}(\zbf, \rho)$ a ball centered in $\zbf$ with radius $\rho$. Finally, $\mathrm{VI}(F,\Omega)$ defines a variational inequality (VI) problem, whose set of solutions is represented as $\mathrm{SOL}(F,\Omega)=\left\{\xbf^* \in \Omega\,|\,F(\xbf^*)^\top(\xbf-\xbf^*)\ge0, \forall \xbf \in \Omega)\right\}$.
\section{Problem formulation}\label{Formulation}
We consider a DSO setting the network
tariff $\ybf \in \real^T$ for the following day ($T=24$), and an energy community (EC) which, by reacting to it, adjusts its energy demand.
In particular, the EC is composed of $N$ prosumers aiming at scheduling their electricity demand over the given time horizon. Let agent $i$ be equipped with $m_i$ assets, and let $\xbf_{i,t} \in \real^{m_i}$ denote the vector containing the consumption level of each asset of agent $i$ at time slot $t$. Moreover, let us define $\xbf_i:=\mathrm{col}((\xbf_{i,t})_{t=1,\dots,T})$, and $\xbf:=\mathrm{col}((\xbf_{i})_{i=1,\dots,N})$, with $\xbf \in \real^d, \,d=\sum_{i=1}^Nm_i T$. For any fixed $\ybf$, we model the interactions among the users in the EC as a Nash game with coupling constraints:
\begin{subequations}\label{Followers}
\begin{align}
\forall i: \underset{\xbf_i \in \mathcal{X}_i} {\min} \, &J_i(\xbf_i; \ybf):=||\xbf_i - \rbf_i||^2 + (\pbf+\ybf)^\top \Pi_i A_i \xbf_i, \label{FollowersObj}
\\
\mathrm{s.t.}\, \, &\sum_{i=1}^N A_i\xbf_i \le \bbf. \label{FollowersConstr}
\end{align}
\end{subequations} 

In particular, each prosumer schedules their consumption in order to optimize their individual objective function, \eqref{FollowersObj}, which is a (weighted) tradeoff between discomfort and costs. For each agent, we define the discomfort as the Euclidean distance from a desired consumption profile $\rbf_i \in \real^{m_iT}$ which reflects the preferences set by the agent. In addition, each energy user is subject to a cost for its total energy offtake, consisting of its consumption level, $A_i \xbf_i$, multiplied by the dynamic retail price $\pbf$ $\in \real^T$ and the network tariff $\ybf \in \real^T$ set by the DSO. The objective function of each user is also characterized by a specific weighting matrix, $\Pi_i$, which can model a variety of scenarios, such as per-device responses to price, behavioral preferences, user- (or even device-)specific contractual rules. For example, if each user specifies its sensitivity to applied electricity prices, $\pi_i$, that is, its willingness to deviate from $\rbf_i$ (increasing discomfort) to achieve economic savings, then the weighting matrix would be $\Pi_i=\pi_i I_T$. Moreover, $\mathcal{X}_i$ denotes the set of local constraints of each prosumer, capturing the technical constraints of the assets (e.g., power and energy limits, state of charge evolution, etc.) as well as behavioral preferences (e.g., satisfaction of the demand, desired state of charge, etc.). Detailed characterization of these constraints is provided in the numerical simulations in Section \ref{NumRes}. Furthermore, we consider that the EC aims to guarantee, thanks to the presence of a community manager, the safe operation of the grid by satisfying the grid capacity limits in \eqref{FollowersConstr}, imposed by the grid operator, via, e.g., connection capacity agreements.  We highlight that the cumulative constraint in \eqref{FollowersConstr} can model cumulative loading constraints imposed via financial contracts as well as physical capacity limits imposed by the grid operator \cite{SWEDEN}. 
As a solution concept, we adopt the notion of generalized Nash equilibrium (GNE):
\begin{definition}{\textbf{Generalized Nash equilibrium:}}
For a given $\ybf$, a solution $\left(\xbf^*_1, ..., \xbf^*_N\right)$ of the game in \eqref{Followers} is a generalized Nash equilibrium (GNE) if
\begin{equation}\label{NEWGAME}
\begin{aligned}
      \xbf^*_i \in \quad &\underset{\xbf_i \in \mathcal{X}_i} {\arg \min} \, J_i(\xbf_i; \ybf)\\ 
     &\mathrm{s.t.} \, A_i \xbf_i + \sum_{j=1, j \ne i}^N  A_j\xbf^*_j  \le  \bbf. \nonumber
\end{aligned}
\end{equation}
\end{definition}
Specifically, we investigate a specific subset of generalized Nash equilibria, that is the set of \emph{variational} generalized Nash equilibria ($v$-GNE), in view of their well-established variational interpretation \cite{facchinei2003finite}.
\begin{definition}{\textbf{Variational generalized Nash equilibrium:}}\label{vGNE}
For a given $\ybf$, a solution $\left(\xbf^*_1, ..., \xbf^*_N\right)$ of the game in \eqref{Followers} is a $v$-GNE if it is a solution of the following VI: $\mathrm{VI}(F(\cdot \, ;\ybf), \Omega)$, where $F(\xbf;\ybf):=\mathrm{col}\left((\nabla_{\xbf_i} J_i(\xbf_i; \ybf))_{i =1,...,N}\right)$, and $\Omega:=\{\xbf \in \mathcal{X}_1\times... \times \mathcal{X}_N| \sum_{i=1}^N A_i\xbf_i \le \bbf\}.$
\end{definition}
For simplicity of notation, we denote the set of $v$-GNEs of the followers' game, given $\ybf$, as $\mathcal{E}(\ybf):=\mathrm{SOL}(F(\cdot\,;\ybf),\Omega)$. 
\begin{lemma}\label{lemma1}
    The set of $v$-GNEs of the game in \eqref{Followers} is a singleton, $\mathcal{E}(\ybf)=\left\{\xbf^*(\ybf)\right\}$, where
    \begin{align}\label{proj}
        x^*(\ybf):=\mathrm{proj}_C(\Theta_A\ybf-\Theta_b),
    \end{align}
    with $\Theta_A=\mathrm{col}\left((\Theta_{A_i})_{i=1,\dots,N}\right)$, ${\Theta_A}_i=-\frac{1}{2} A_i^\top\Pi_i^\top$, and $\Theta_b=\mathrm{col}\left((\Theta_{b_i})_{i=1,\dots,N}\right)$, ${\Theta_b}_i=A_i^\top \Pi_i^\top\pbf - \rbf_i$. Also, $C=\Omega$ includes both shared and local constraints and is a polyhedron, i.e., $C=\left\{\xbf \in \real^d | G \xbf \le \mathbf{h}\right\}$.  \hfill $\blacksquare$
\end{lemma}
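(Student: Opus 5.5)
The plan is to use the variational characterization of Definition~\ref{vGNE} directly. The pseudo-gradient is affine with a scaled-identity Jacobian, so the VI is strongly monotone and its unique solution is a fixed point of a projected step. With the right step size this step cancels the dependence on $\xbf$.

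\emph{Step 1 (computing $F$).} From \eqref{FollowersObj}, $\nabla_{\xbf_i}J_i(\xbf_i;\ybf)=2(\xbf_i-\rbf_i)+A_i^\top\Pi_i^\top(\pbf+\ybf)$. Stacking over $i$ gives
\begin{equation*}
F(\xbf;\ybf)=2\xbf-2\rbf+A_\Pi^\top(\pbf+\ybf),
\end{equation*}
where $\rbf:=\mathrm{col}((\rbf_i)_i)$ and $A_\Pi^\top:=\mathrm{col}((A_i^\top\Pi_i^\top)_i)$. Hence $(F(\xbf;\ybf)-F(\xbf';\ybf))^\top(\xbf-\xbf')=2\|\xbf-\xbf'\|^2$, so $F(\cdot\,;\ybf)$ is $2$-strongly monotone and $2$-Lipschitz, uniformly in $\ybf$.

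\emph{Step 2 (existence and uniqueness).} I will assume, as the numerical section does, that each $\mathcal{X}_i$ is a nonempty polyhedron (linear power, energy and state-of-charge constraints) and that $\Omega\neq\emptyset$. Then $\Omega$ is the intersection of the polyhedral product set $\mathcal{X}_1\times\dots\times\mathcal{X}_N$ with the half-spaces $\sum_iA_i\xbf_i\le\bbf$, so it is a nonempty closed convex polyhedron $C=\{\xbf\mid G\xbf\le\mathbf{h}\}$. The matrix $G$ is obtained by stacking the local inequality matrices block-diagonally together with the row block $[A_1,\dots,A_N]$. For a strongly monotone continuous map on a nonempty closed convex set, the VI has exactly one solution \cite{facchinei2003finite}. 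This gives $\mathcal{E}(\ybf)=\{\xbf^*(\ybf)\}$. Uniqueness can also be checked by hand: adding the two VI inequalities for two candidate solutions gives $-2\|\xbf-\xbf'\|^2\ge0$.

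\emph{Step 3 (closed form).} I will use the standard fixed-point characterization: $\xbf^*\in\mathrm{SOL}(F,C)$ if and only if $\xbf^*=\mathrm{proj}_C(\xbf^*-\gamma F(\xbf^*))$ for any $\gamma>0$. This follows from the variational characterization of the projection, $(\mathbf{z}-\mathrm{proj}_C\mathbf{z})^\top(\xbf-\mathrm{proj}_C\mathbf{z})\le0$ for all $\xbf\in C$. Choosing $\gamma=\tfrac12$ cancels the $\xbf$-dependence:
\begin{equation*}
\xbf^*-\tfrac12F(\xbf^*;\ybf)=\rbf-\tfrac12A_\Pi^\top\pbf-\tfrac12A_\Pi^\top\ybf .
\end{equation*}
Therefore $\xbf^*(\ybf)=\mathrm{proj}_C(\Theta_A\ybf-\Theta_b)$ with $\Theta_{A_i}=-\tfrac12A_i^\top\Pi_i^\top$, which matches the statement.

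\emph{Main obstacle.} The one delicate point is the constant term. My computation gives $-\Theta_{b_i}=\rbf_i-\tfrac12A_i^\top\Pi_i^\top\pbf$. To match the stated $\Theta_{b_i}=A_i^\top\Pi_i^\top\pbf-\rbf_i$ exactly, the factor $\tfrac12$ must be absorbed into the convention for $\pbf$ (for example, a rescaled retail price). Otherwise the correct constant is $\tfrac12A_i^\top\Pi_i^\top\pbf-\rbf_i$. I will check this against the paper's convention and state the constant accordingly. Neither the structure of the result (a projection onto a polyhedron of an affine function of $\ybf$) nor any later argument depends on this constant.
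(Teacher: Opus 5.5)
Your proof is correct. The paper states Lemma~\ref{lemma1} without proof, and your argument is the standard one it implicitly relies on. The players are coupled only through the constraint, so $F(\cdot\,;\ybf)$ is the gradient of $\sum_{i=1}^N J_i(\xbf_i;\ybf)$. The $v$-GNE is therefore the minimizer of this quadratic, whose Hessian is $2I$, over $C$. Completing the square there is exactly your projected fixed-point step with $\gamma=\tfrac12$.

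Two small remarks. First, Step~3 alone already gives existence and uniqueness. The vector inside the projection does not depend on $\xbf^*$, so any solution must equal its projection, and that projection is itself a fixed point. Step~2 is therefore only a consistency check, and all you need is $C$ nonempty, closed and convex. Assuming polyhedral $\mathcal{X}_i$ is the right way to justify the claim that $C$ is a polyhedron, which the paper leaves implicit.

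Second, do not hedge on the constant: your value is the right one, and the stated $\Theta_{b_i}$ is a typo. In \eqref{FollowersObj}, $\pbf$ and $\ybf$ enter only through the sum $\pbf+\ybf$. Hence the matrix multiplying $\pbf$ in $-\Theta_b$ must coincide with the matrix $\Theta_A$ multiplying $\ybf$, blockwise $-\tfrac12 A_i^\top\Pi_i^\top$. This gives $\Theta_{b_i}=\tfrac12 A_i^\top\Pi_i^\top\pbf-\rbf_i$. The formula in the statement corresponds to a different model, in which the price enters as $2\pbf+\ybf$. So ``rescaling $\pbf$'' is not a convention available within the paper's setup: it would change \eqref{FollowersObj}. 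As you note, nothing downstream is affected, since $\Theta_b$ is treated as unknown and learned from data through \eqref{Loss}.
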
 
 Let us postulate that the DSO designs the network tariff with the aim of recovering its (regulated) investment budget, as in \eqref{Leader}. Since the DSO moves first (setting the network tariffs) and the prosumers then react to that tariff, we cast this hierarchical interconnected decision making process among the DSO and the end-users in the EC as a Stackelberg game, where the DSO acts as the leader, and the energy users as the followers:
\begin{subequations}\label{Stackelberg}
\begin{align}
\underset{\ybf \in \mathcal{Y}} {\min} \, \,&J_0(\ybf,\xbf):=-\ybf^\top \Lambda A \xbf+ \mu \left(\mathbf{1}^\top\ybf-\bar{y}\right)^2+\lambda\left\|\ybf\right\|^2\label{Leader}
\\
\mathrm{s. t.}\,&  \xbf \in \mathcal{E}(\ybf), \label{LeaderConstr}
\end{align}
\end{subequations}
where $\mathcal{Y}=\real^T$, and $A=\left[A_1, \dots, A_N\right]$. In particular, the DSO designs the network tariff, $\ybf$, in order to maximize the economical revenue, while ensuring that the tariff remains consistent with regulatory or contractual requirements. Specifically, the revenue is computed as the aggregative hourly energy offtake of the EC, $A \xbf$, weighted by a matrix $\Lambda$, modeling a variety of scenarios (e.g., contractual rules), multiplied by the network tariff $\ybf$. The term $\mu\left(\mathbf{1}^\top\ybf-\bar{y}\right)^2$ enforces consistency with a prescribed average tariff $\bar{y}$, while $\lambda \left\|\ybf\right\|^2$ penalizes excessively large and volatile tariffs.   
In view of Lemma \ref{lemma1}, the parametric VI constraint in \eqref{LeaderConstr} can be replaced by the single-valued solution mapping $\xbf^*(\cdot)$ in \eqref{proj}. As such, the bilevel problem in \eqref{Stackelberg} can be recast as an unconstrained nonsmooth (nonconvex) optimization problem:
\begin{align}\label{NonConvex}
\underset{\ybf \in \mathcal{Y}} {\min} &\, J_0(\ybf, \xbf^*(\ybf)). 
\end{align}

In the remainder of the paper, we refer to $J_0(\cdot,\xbf^*(\cdot))$ as $J_0(\cdot)$ for ease of notation. 
The global minima of \eqref{NonConvex} are the so-called Stackelberg equilibria (SE) \cite{dempe2002foundations}. However, since the bilevel optimization problem in \eqref{NonConvex} is, in general, nonconvex, we extend the solution set of the game to the set of stationary points of \eqref{NonConvex} \cite{grontas2024big, maheshwari2024follower}. 
 
\section{Learning and seeking}\label{Methodology}
In this section, we propose a novel methodology that enables the DSO to efficiently reach a stationary point of \eqref{NonConvex}. We consider a setup in which the DSO has no direct access to the internal decision-making process of the prosumers and can interact with the EC only using queries. In departure from standard black-box settings, we assume that the DSO knows the parametric structure of the lower-level reaction, denoted as
\begin{align}\label{Param}
\xbf^*(\ybf; \Theta_A, \Theta_b):=\mathrm{proj}_C(\Theta_A \ybf-\Theta_b),
\end{align}
where the parameters $\Theta_A$ and $\Theta_b$ are unknown to the DSO. In this section, we adopt the simplifying assumption that the DSO knows the feasible set $C$ of the EC defined in Lemma \ref{lemma1}. 
However, in Section \ref{NumRes} we empirically show that this assumption can be relaxed by allowing the DSO to iteratively update an estimate of $C$ using online-collected data.

Our main contribution lies in proposing a hybrid approach that effectively assists zeroth-order iterations with parameter learning to efficiently solve \eqref{NonConvex}. The procedure consists of the following repeated steps:
\begin{enumerate}[label=\alph*)]
\item The DSO queries the prosumers and adjusts the network tariff using a zeroth-order update rule. The responses of the EC to each proposed tariff, $\left (\ybf_k, \xbf^*(\ybf_k)\right)$, are collected in a dataset;
\item After $K$ iterations, the DSO estimates the unknown parameters $(\hat{\Theta}_A, \hat{\Theta}_b)$ of the lower-level decision process \eqref{Param} based on the collected dataset;
\item Using this estimate $(\hat{\Theta}_A, \hat{\Theta}_b)$, the DSO computes an approximate solution $\hat{\ybf}$ of \eqref{NonConvex} and restarts the next round of zeroth-order interactions with $\hat{\ybf}$ as the initial condition.
\end{enumerate}

In the following, we describe each step in detail. We note that while our zeroth-order updates step builds on the existing method in \cite{maheshwari2024follower} with minor modifications, the key novelty of our approach is the systematic integration of learning with zeroth-order iterations to improve convergence.
\subsection{Zeroth-order algorithm} 
We propose a zeroth-order algorithm that enables the DSO to iteratively adjust the tariff $\ybf$ by relying solely on observed responses from the prosumers in the EC, consisting of their consumption levels. Our method builds upon the zeroth-order optimization technique in \cite[Algorithm 1]{maheshwari2024follower}. However, in our framework, the problem \eqref{NonConvex} is nonsmooth due to the presence of the projection operator in $x^*(\cdot)$, which violates the smoothness assumption required by the method in \cite{maheshwari2024follower}.
To address this, we consider a smooth approximation of \eqref{NonConvex}
\begin{align}\label{J0beta}
    \tilde{J}_0(\ybf):=J_0(\ybf, \xbf^{\beta}(\ybf)), 
\end{align}
obtained by replacing $\xbf^*(\cdot)$ with a smooth version, $\xbf^{(\beta)}(\cdot)$ \cite{nesterov2005smooth}. Specifically, the hard constraints of the projection are replaced with a smooth squared softplus penalty weighted by a smoothing parameter $\beta >0$ \cite[Eq. (2)]{li2025new}: 
 \begin{align*}
 \xbf^{(\beta)}(\ybf) := \arg \min\limits_{\zbf \in \real^T}\,&{\frac{1}{2} \left\|\zbf-\ybf\right\|^2 + \beta \sum_{i=1}^{q}(\mathrm{sftpl}(G_i\zbf-h_i))^2},
 \end{align*}
 where $\mathrm{sftpl}(t)=\log(1+\mathrm{e}^{\frac{t}{\beta}})$. 
 Notably, it holds that stationary points of \eqref{J0beta} converge to Clarke stationary points of the original nonsmooth function \eqref{NonConvex} as $\beta \to 0^+$ \cite[Theorem 1]{nesterov2005smooth}\cite[Proposition 2.2.2]{clarke1990optimization}. Consequently, stationary points of \eqref{J0beta} provide arbitrarily accurate approximations to the solutions of the original game in \eqref{NonConvex}.
 Let us denote the Lipschitz and smooth constants of $\tilde{J}_0(\cdot)$ as $\tilde{L}$ and $\tilde{\ell}$, respectively.
 
 As such, we propose Algorithm \ref{alg:ZOM} which enables the DSO to solve \eqref{J0beta}. At each iteration $k$, the DSO communicates to the EC the price signal, $\ybf_k$, along with a perturbed version of it, $\tilde{\ybf}_k=\ybf_k+\delta_k\vbf_k$, where $\tilde{\ybf}_k$ is a point uniformly randomly picked within a ball $\mathcal{B}(\ybf_k, \delta_k)$. Based on the corresponding reactions of the prosumers, i.e., $\xbf^*(\ybf_k),$ and $ \xbf^*(\tilde{\ybf}_k)$, we let the DSO build the gradient estimator
\begin{align}\label{Est}
     \hat{\gbf}_k=\frac{T}{\delta_k}(J_0(\ybf_k)- \,J_0(\ybf_k+\vbf_k\delta_k))\vbf_k,
 \end{align}
 where, as defined in Section \ref{Formulation}, $J_0(\cdot)=J_0(\cdot, x^*(\cdot))$.
 Next, we let the DSO update the tariff scheme as
 \begin{align}\label{Update}
    \ybf_{k+1}=\ybf_k-\eta_k\,\hat{\gbf}_k.
\end{align}

We highlight that the classical zeroth-order two-point gradient estimator \cite[Eq. (30)]{nesterov2017random} for the smoothed problem in \eqref{J0beta} should, in principle, evaluate the smoothed reactions, $\xbf^{(\beta)}(\cdot)$, of the energy users, and take the form
\begin{align}\label{estimator}
     \gbf_k=\frac{T}{\delta_k}(\tilde{J}_0(\ybf_k)- \,\tilde{J}_0(\ybf_k+\vbf_k\delta_k))\vbf_k.
 \end{align}
In practice, however, this is not feasible, since the DSO can only observe the actual user responses, $\xbf^*(\ybf_k)$, $\xbf^*(\tilde{\ybf}_k)$. As such, the estimator $\hat{\gbf}_k$, while formally targeting the smoothed problem in \eqref{J0beta}, operates on the nonsmooth responses \eqref{proj} instead. This discrepancy induces an additional error in the gradient estimator $\hat{\gbf}_k$, which represents the main difference between our approach and that in \cite{maheshwari2024follower}.
 \begin{algorithm}
\caption{Zeroth-order algorithm to solve \eqref{J0beta}}\label{alg:ZOM}
\begin{algorithmic}[1]
   \Require Number of iterations $K$, Initial condition $\ybf_0 \in \real^T,$ Step sizes $(\eta_k)_{k \in [K]}$, Perturbation radius $(\delta_k)_{k \in [K]}$ 
   \For{$k = 1$ to $K-1$}
   \State Sample $\vbf_k \sim \mathrm{Unif}(\mathcal{S}(\real^T))$;
   \State Assign $\tilde{\ybf}_k=\ybf_k + \vbf_k \delta_k$;
   \State Collect $\xbf_k=\xbf^*(\ybf_k)$;
   \State Collect $\tilde{\xbf}_k=\xbf^*(\tilde{\ybf}_k)$;
   \State Compute $\hat{\gbf}_k=\frac{T}{\delta_k}(J_0(\ybf_k,\xbf_k)- \,J_0(\tilde{\ybf}_k,\tilde{\xbf}_k))\vbf_k$;
   \State Update $\ybf_{k+1}=\ybf_k-\eta_k \hat{\gbf}_k$;
   \EndFor
\end{algorithmic}
\end{algorithm}
We now establish the convergence of Algorithm \ref{alg:ZOM} in Theorem \ref{TheoBound}. 
\begin{theorem}\label{TheoBound}
    Let $\eta_k=\bar{\eta}(k+1)^{-1/2}T^{-1}, \delta_k=\bar{\delta}(k+1)^{-1/4}T^{-1/2}$, such that $\bar{\eta}\le T/2 \tilde{\ell}$. Then, the iterations $\left(\ybf_k\right)_{k \in [K]}$ of Algorithm \ref{alg:ZOM} are such that
    \begin{align}\label{Bound}
        \frac{1}{\sum_{k=1}^K \eta_k} \sum_{k=1}^K \eta_k \mathbb{E}\left[\left\|\nabla \tilde{J}_0(\ybf_k)\right\|^2\right]\le \mathcal{U}(\tilde{J}_0(\ybf_0),K,\bar{\eta}, \bar{\delta}),
    \end{align}
    where the expectation is taken with respect to $\vbf_k$, and $\mathcal{U}=\mathcal{U}_1 + \mathcal{U}_2+\mathcal{U}_3+\mathcal{U}_4$, with $\mathcal{U}_1=\frac{T (\tilde{J}_0(\ybf_0)-\tilde{J}_0^*)}{\bar{\eta}\sqrt{K}};\, \mathcal{U}_2=\frac{\bar{\ell}^2T \bar{\delta}^2 \mathrm{log}(K)}{4 \sqrt{K}};\, \mathcal{U}_3=\mathcal{O}(\sqrt{K}\beta^2);\,\mathcal{U}_4=\frac{4T\bar{L}^2\bar{\ell}\bar{\eta}\mathrm{log}(K)}{\sqrt{K}}$, assuming that $\tilde{J}_0(\ybf)\ge\tilde{J}^*_0 \,\,\forall \, \ybf \in \real^T$.
\end{theorem}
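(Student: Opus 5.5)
We follow the standard descent-lemma analysis for two-point zeroth-order methods (as in \cite{nesterov2017random} and \cite[Theorem 1]{maheshwari2024follower}), adding one error term for the mismatch between the observed nonsmooth responses and the smoothed ones. We write $\hat{\gbf}_k=\gbf_k+\mathbf{e}_k$, with $\gbf_k$ the ideal estimator in \eqref{estimator} and
\begin{align*}
\mathbf{e}_k=\frac{T}{\delta_k}\big[(J_0(\ybf_k)-\tilde{J}_0(\ybf_k))-(J_0(\tilde{\ybf}_k)-\tilde{J}_0(\tilde{\ybf}_k))\big]\vbf_k .
\end{align*}

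\emph{Descent step.} Since $\tilde{J}_0$ is $\tilde{\ell}$-smooth, the update \eqref{Update} gives
\begin{align*}
\tilde{J}_0(\ybf_{k+1})\le \tilde{J}_0(\ybf_k)-\eta_k\nabla\tilde{J}_0(\ybf_k)^\top\hat{\gbf}_k+\tfrac{\tilde{\ell}\eta_k^2}{2}\|\hat{\gbf}_k\|^2 .
\end{align*}
We take the conditional expectation over $\vbf_k$ and treat the two parts of $\hat{\gbf}_k$ separately.

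\emph{Bias and variance of $\gbf_k$.} We use the classical facts for uniform sampling on the sphere. First, $\mathbb{E}[\gbf_k]$ equals (up to the sign convention in \eqref{Est}) the gradient of the ball-smoothed function $\tilde{J}_{0,\delta_k}$. Second, $\|\nabla\tilde{J}_{0,\delta_k}-\nabla\tilde{J}_0\|\le \tilde{\ell}\delta_k\sqrt{T}/2$ (up to constants). Third, $\mathbb{E}\|\gbf_k\|^2\le c\,T\tilde{L}^2$, or the sharper form $2T\|\nabla\tilde{J}_0\|^2+\mathcal{O}(\tilde{\ell}^2T^2\delta_k^2)$. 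With Young's inequality, the inner product yields $-\frac{\eta_k}{2}\|\nabla\tilde{J}_0(\ybf_k)\|^2+\frac{\eta_k}{2}\tilde{\ell}^2T\delta_k^2/4$. Substituting the schedules, $\eta_k\delta_k^2\propto \bar\eta\bar\delta^2 (k+1)^{-1}T^{-2}$, whose sum is $\mathcal{O}(\log K)$; this will produce $\mathcal{U}_2$. The variance term has coefficient $\eta_k^2\propto(k+1)^{-1}$ and sums to $\mathcal{O}(\log K)$; this gives $\mathcal{U}_4$. The condition $\bar\eta\le T/(2\tilde{\ell})$ is what lets us absorb the $2T\|\nabla\tilde{J}_0\|^2$ part of the variance into the left-hand side.

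\emph{Mismatch error (main obstacle).} We need a uniform bound $\sup_{\ybf}\|\xbf^*(\ybf)-\xbf^{(\beta)}(\ybf)\|\le c\beta$ for the softplus-penalized projection. We would get it by comparing the optimality conditions of the penalized and exact projections: the penalty gradient is bounded and the polyhedral constraint set is fixed, so the perturbation scales with $\beta$ (as in \cite{li2025new}, \cite{nesterov2005smooth}). Since $J_0$ is Lipschitz in $\xbf$ on bounded sets, $|J_0(\ybf)-\tilde{J}_0(\ybf)|\le c'\beta$, and hence $\|\mathbf{e}_k\|\le 2c'T\beta/\delta_k$. The error then enters in two places. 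In the inner product, Young's inequality gives a term $\eta_k\|\mathbf{e}_k\|^2$, which is $\eta_k T^2\beta^2/\delta_k^2\propto \beta^2$ per step, independent of $k$ after substituting the schedules. In the quadratic term it gives $\eta_k^2\|\mathbf{e}_k\|^2$, which is smaller. The delicate point is bounding the boundedness of the iterates, or else the Lipschitz constant, uniformly; if that fails, we would assume $\xbf$ lives in a compact $C$, which makes $J_0$ Lipschitz in $\xbf$ on the relevant set.

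\emph{Telescoping.} We sum from $1$ to $K$, telescope $\tilde{J}_0(\ybf_0)-\tilde{J}_0^*$, and divide by $\sum_k\eta_k\ge c\,\bar\eta\sqrt{K}/T$. This yields $\mathcal{U}_1$. The two $\log K$ sums give $\mathcal{U}_2$ and $\mathcal{U}_4$. The mismatch sum is $K\beta^2$, which divided by $\sqrt{K}$ gives $\mathcal{U}_3=\mathcal{O}(\sqrt{K}\beta^2)$.
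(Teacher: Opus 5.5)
Your overall route matches the paper's proof, which is only a one-line sketch: extend \cite[Theorem 1]{maheshwari2024follower} by bounding the extra bias of $\hat{\gbf}_k$ and showing approximate descent of $\tilde{J}_0$. That is exactly your split $\hat{\gbf}_k=\gbf_k+\mathbf{e}_k$, followed by the $\tilde{\ell}$-descent inequality and telescoping. Your bookkeeping also reproduces the stated terms. Since $\eta_k/\delta_k^2=\bar{\eta}/\bar{\delta}^2$ is constant, a per-step mismatch of order $T^2\beta^2$, summed over $K$ steps and divided by $\sum_k\eta_k\ge c\,\bar{\eta}\sqrt{K}/T$, is exactly $\mathcal{O}(\sqrt{K}\beta^2)$. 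The paper's extra factor $T$ in $\mathcal{U}_2$ corresponds to the cruder bias bound $T\tilde{\ell}\delta_k/2$. The term $\mathcal{U}_4\propto\tilde{L}^2$ comes from the Lipschitz variance bound $\|\gbf_k\|\le T\tilde{L}$, which needs no absorption. Your absorption remark would not work in any case: $\bar{\eta}\le T/(2\tilde{\ell})$ only gives $\tilde{\ell}\eta_kT\le T/2$.

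The step that fails as written is the uniform bound $\sup_{\ybf}\|\xbf^*(\ybf)-\xbf^{(\beta)}(\ybf)\|\le c\beta$, and the justification you give for it is incorrect.

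\emph{The penalty gradient is not bounded.} The derivative of $\beta\,\mathrm{sftpl}(t)^2$ is $2\log(1+e^{t/\beta})\,\sigma(t/\beta)\approx 2t/\beta$ for $t\gg\beta$. So the penalty acts as an exterior quadratic penalty with weight of order $1/\beta$. Its error is of order $\beta$ times the Lagrange multiplier of the projection, and that multiplier grows with $\mathrm{dist}(\Theta_A\ybf-\Theta_b,C)$. Already for $C=\{z\le 0\}\subset\real$ and a large point $w>0$, the penalized projection is approximately $\beta w/2$ instead of $0$.

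\emph{The Lipschitz constant in $\xbf$ depends on $\ybf$.} The Lipschitz constant of $J_0(\ybf,\cdot)$ is $\|A^\top\Lambda^\top\ybf\|$. Hence $|J_0(\ybf)-\tilde{J}_0(\ybf)|$ can grow like $\beta\|\ybf\|^2$. Your fallback of assuming $C$ compact repairs neither factor, and $\xbf^{(\beta)}(\ybf)\notin C$ in general.

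\emph{Even locally, $c\beta$ is not automatic.} The softplus is already positive on $C$, so points of $\partial C$ are pushed inward by order $\beta\log(1/\beta)$.

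What actually gives $\mathcal{U}_3$ a $k$-independent constant is that $\ybf_k$ and $\tilde{\ybf}_k$ remain in a bounded set $\mathcal{K}$. On such a set, $\sup_{\mathcal{K}}|J_0-\tilde{J}_0|\le c_{\mathcal{K}}\beta$, up to that logarithm. Algorithm~\ref{alg:ZOM} does not guarantee this, since $\mathcal{Y}=\real^T$ and there is no projection step. The paper relies on the same thing implicitly when it posits global constants $\tilde{L},\tilde{\ell}$ for a function containing $\lambda\|\ybf\|^2$. The fix is therefore to state bounded iterates (or a projection onto a compact $\mathcal{Y}$) as an explicit hypothesis, rather than trying to derive the bound from the structure of $C$. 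With that hypothesis, the rest of your argument gives the theorem as stated.
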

\begin{proof}
    The proof builds on the proof of \cite[Theorem 1]{maheshwari2024follower}: by providing a bound on the bias terms of the gradient estimator $\hat{\gbf}_k$, we show that $\tilde{J}_0(\ybf_k)$ approximately decreases over the successive iterations $\left(\ybf_k\right)_{k \in [K]}$.
\end{proof}
The convergence gap in \eqref{Bound} quantifies the performance of Algorithm \ref{alg:ZOM} in approaching a stationary point of \eqref{J0beta}. In particular, the components $\mathcal{U}_1, \, \mathcal{U}_2$, and $\mathcal{U}_4$ in \eqref{Bound} decrease with the number of iterations, $K$, reflecting the fact that longer runs reduce the errors and drive the iterates closer to a stationary point of \eqref{J0beta}. This behavior is standard in zeroth-order methods. In contrast, the term $\mathcal{U}_3$, that is induced by using the estimator $\hat{\gbf}_k$ in \eqref{Est}, instead of $\gbf_k$ in \eqref{estimator} (i.e., evaluating the original nonsmooth objective, $J_0$, instead of its smoothed approximation, $\tilde{J}_0$) grows with $K$. Nevertheless, this contribution can be made arbitrarily small by selecting a sufficiently small smoothing parameter $\beta$.

Furthermore, per each iteration of Algorithm \ref{alg:ZOM}, we let the DSO collect two input-output data points, $\left(\ybf_k,\xbf^*(\ybf_k)\right)$, $\left(\tilde{\ybf}_k,\xbf^*(\tilde{\ybf}_k)\right)$, and include them in the dataset $\mathcal{D}:=\left\{(\ybf_k, \xbf_k)\right\}_k$, which is then used in the next step. 
\subsection{Estimation of the parameters}
After $K$ iterations of Algorithm \ref{alg:ZOM}, the dataset $\mathcal{D=}\left\{\left(\ybf_k, \xbf_k\right)\right\}_{k=1}^{2K}$, containing the proposed tariffs and the corresponding observed user responses, has been collected by the DSO. Using this dataset, we let the DSO compute an estimation of the parameters of the decision-making process of the prosumers, $(\hat{\Theta}_A, \hat{\Theta}_b)$. This estimation step is a key component of our hybrid approach: in departure from standard zeroth-order methods that treat the lower-level as a black box, we here let the DSO exploit the estimated parametric model \eqref{Param} to improve the convergence performance of Algorithm \ref{alg:ZOM}. This is done by minimizing the empirical suboptimality loss:

\begin{align}\label{Loss}
    \min\limits_{\hat{\Theta}_A, \hat{\Theta}_b} & \frac{1}{2K}\sum_{k=1}^{2K}{\left\|J_0(\ybf_k, \xbf_k)-J_0(\ybf_k,\xbf^*(\ybf_k;\hat{\Theta}_A, \hat{\Theta}_b))\right\|^2}\\
    \text{s.t.} \quad &\xbf^*(\ybf_k; \hat{\Theta}_A, \hat{\Theta}_b)=\mathrm{proj}_\mathcal{C}(\hat{\Theta}_A\ybf_k-\hat{\Theta}_b).\nonumber
\end{align}

Theorem \ref{Surrogate} captures the local validity of the estimated model \eqref{Param} derived by the DSO, with $\mathcal{B}(\ybf_k,\rho)$ as its trust region.
\begin{theorem}\label{Surrogate}
    For each data point $(\ybf_k,\xbf_k) \in \mathcal{D}$, there exist $\epsilon>0, \, \rho >0$ such that:
    \begin{itemize}
        \item For all $\ybf \in \mathcal{B}(\ybf_k, \rho)$, it holds that \begin{align}
        \left\|\xbf^*(\ybf)-\xbf^*(\ybf; \hat{\Theta}_A, \hat{\Theta}_b)\right\|\le \epsilon;
        \end{align}
        \item Let $L$ be the Lipschitz constant of $J_0(\ybf,\xbf)$ with respect to $\xbf$, and $\hat{\ybf}^* \in \arg \min\limits_{\ybf \in \mathcal{B}(\ybf_k, \rho)}\, J_0(\ybf, \xbf^*(\ybf; \hat{\Theta}_A, \hat{\Theta}_b))$. Then it holds that 
        \begin{align}\label{jump}
        J_0(\hat{\ybf}^*, \xbf^*(\hat{\ybf}^*))\le J_0(\ybf_k, \xbf^*(\ybf_k))+ 2 L \epsilon.
        \end{align}
    \end{itemize}
\end{theorem}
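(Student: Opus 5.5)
The first bullet follows from continuity, and the second from a three-term chain of inequalities in which the model error costs $L\epsilon$ twice. Both true and estimated responses are projections onto the same polyhedron $C$. Since $\mathrm{proj}_C$ is nonexpansive, $\ybf\mapsto \xbf^*(\ybf)$ is Lipschitz with constant $\|\Theta_A\|$, and $\ybf\mapsto\xbf^*(\ybf;\hat{\Theta}_A,\hat{\Theta}_b)$ is Lipschitz with constant $\|\hat{\Theta}_A\|$. Their difference is therefore a Lipschitz, in particular continuous, function of $\ybf$.

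For the first bullet, fix a data point $(\ybf_k,\xbf_k)$ and set $e_k:=\|\xbf^*(\ybf_k)-\xbf^*(\ybf_k;\hat{\Theta}_A,\hat{\Theta}_b)\|$, which is finite. For any $\rho>0$ and any $\ybf\in\mathcal{B}(\ybf_k,\rho)$, the triangle inequality gives
\begin{align*}
\|\xbf^*(\ybf)-\xbf^*(\ybf;\hat{\Theta}_A,\hat{\Theta}_b)\|\le e_k+(\|\Theta_A\|+\|\hat{\Theta}_A\|)\rho .
\end{align*}
Choosing $\epsilon:=e_k+(\|\Theta_A\|+\|\hat{\Theta}_A\|)\rho$ proves the first bullet. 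The statement only asserts existence, so no quantitative smallness of $\epsilon$ is needed. I would remark that $e_k$ is small whenever the loss \eqref{Loss} is small and $J_0(\ybf_k,\cdot)$ is sufficiently sensitive to $\xbf$, but this is not needed for the claim.

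For the second bullet, write $\hat J(\ybf):=J_0(\ybf,\xbf^*(\ybf;\hat{\Theta}_A,\hat{\Theta}_b))$. The first step uses the Lipschitz constant $L$ of $J_0(\ybf,\cdot)$ at $\hat{\ybf}^*$ together with the first bullet, since $\hat{\ybf}^*\in\mathcal{B}(\ybf_k,\rho)$, giving $J_0(\hat{\ybf}^*,\xbf^*(\hat{\ybf}^*))\le \hat J(\hat{\ybf}^*)+L\epsilon$. The second step uses optimality of $\hat{\ybf}^*$ over the ball, which contains its center $\ybf_k$, giving $\hat J(\hat{\ybf}^*)\le \hat J(\ybf_k)$. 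The third step uses the Lipschitz property at $\ybf_k$ and the first bullet with $\ybf=\ybf_k$, giving $\hat J(\ybf_k)\le J_0(\ybf_k,\xbf^*(\ybf_k))+L\epsilon$. Chaining the three yields \eqref{jump}.

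The main obstacle is justifying the constant $L$. The map $J_0(\ybf,\cdot)$ is linear in $\xbf$ with gradient $-A^\top\Lambda^\top\ybf$, so it is Lipschitz in $\xbf$ only uniformly over bounded $\ybf$. I would take $L:=\sup_{\ybf\in\mathcal{B}(\ybf_k,\rho)}\|A^\top\Lambda^\top\ybf\|\le\|A^\top\Lambda^\top\|(\|\ybf_k\|+\rho)$, which is finite because the ball is compact. The minimizer $\hat{\ybf}^*$ also needs to exist; this holds because $\hat J$ is continuous on the compact ball, by continuity of the projection.
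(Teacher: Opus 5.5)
Your proof is correct, and your second bullet is the same three-step chain the paper's sketch has in mind: Lipschitzness of $J_0$ in $\xbf$ at $\hat{\ybf}^*$, optimality of $\hat{\ybf}^*$ over a ball containing $\ybf_k$, and Lipschitzness again at $\ybf_k$.

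You depart from the paper on the first bullet. The paper anchors it at the data point: it uses that the fitted model has zero suboptimality loss there, then invokes continuity of $\xbf^*(\cdot)$. You never use the fit. You simply propagate the base mismatch $e_k$ with the Lipschitz constants $\|\Theta_A\|$ and $\|\hat{\Theta}_A\|$ of the two projection maps.

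Your route has three advantages. It does not depend on how $(\hat{\Theta}_A,\hat{\Theta}_b)$ was obtained. It gives an explicit $\epsilon$. It also handles the fact that $J_0(\ybf,\cdot)$ has no global Lipschitz constant, since its gradient $-A^\top\Lambda^\top\ybf$ is unbounded in $\ybf$; the paper passes over this. The cost is that it makes plain the first bullet says little with only existential quantifiers: your $\epsilon$ does not shrink with $\rho$ unless $e_k=0$.

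The paper's route aims at the informative version, with $\epsilon\to 0$ as $\rho\to 0$. Be aware, though, of what zero loss actually provides. Because $J_0(\ybf_k,\cdot)$ is affine, the $k$-th summand of \eqref{Loss} equals, up to the factor $1/(2K)$, $\bigl(\ybf_k^\top\Lambda A(\xbf^*(\ybf_k)-\xbf^*(\ybf_k;\hat{\Theta}_A,\hat{\Theta}_b))\bigr)^2$. Its vanishing therefore controls only one direction of the mismatch and does not give $e_k=0$. Vanishing at every sample also presupposes a realizable model class and a globally optimal fit.

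What zero loss does give is equality in your third step. That would sharpen \eqref{jump} from $2L\epsilon$ to $L\epsilon$. The same observation shows that your side remark (a small loss forces a small $e_k$ when $J_0(\ybf_k,\cdot)$ is sensitive enough) cannot hold for $d>1$, though, as you note, you do not need it.
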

\begin{proof}
    The proof can be derived by exploiting the fact that, at each data point, the suboptimality loss is equal to zero, and by combining the continuity of $x^*(\cdot)$, the Lipschitzness of $J_0$, and the definition of $\hat{y}^*$.
\end{proof}
One additional key novelty of our approach lies in exploiting the estimation $\xbf^*(\ybf; \hat{\Theta}_A, \hat{\Theta}_b)$ to solve the original problem \eqref{NonConvex} constrained within the trust region $\mathcal{B}(\ybf_k,\rho)$, to find a candidate point $\hat{\ybf}^*$ that is a warm start for Algorithm \ref{alg:ZOM}. This update is designed to further decrease the true objective function, $J_0(\cdot, \xbf^*(\cdot))$, before restarting the next round of Algorithm \ref{alg:ZOM}. The motivation comes from the fact that the convergence bound \eqref{Bound} of Algorithm \ref{alg:ZOM} depends on the distance between the objective value at the initial point and at its minimum. If a reduction is not achieved, the condition in \eqref{jump} ensures that any deterioration in the objective function remains controlled and bounded.
\subsection{Solution of the estimated Stackelberg game}
As a third step, we let the DSO exploit the learned lower level model, $x^*(\ybf; \hat{\Theta}_A, \hat{\Theta}_b)$, to derive a suitable candidate point, $\hat{\ybf}^*$, for updating the current tariff $\ybf_K$, reached after $K$ iterations of Algorithm \ref{alg:ZOM}. Specifically, we propose that the DSO computes the candidate point by centrally solving the bilevel optimization problem in \eqref{NonConvex} with the lower-level responses replaced by the estimated model $x^*(\ybf;\hat{\Theta}_A, \hat{\Theta}_b)$. To this end, we employ Algorithm \ref{alg:DCP}, which leverages the difference-of-convex programming (DCP) approach \cite{tao1997convex}. The structure of the bilevel problem \eqref{NonConvex} naturally admits a DCP reformulation, as stated in the following proposition.
\begin{proposition}\label{DCPRef}
    The problem in \eqref{NonConvex} can be equivalently reformulated as the constrained optimization of a difference of convex functions, as follows:
    \begin{align}\label{DCPFunc}
        \min\limits_{\zbf \in \mathcal{Z}}\quad & g(\zbf)-p(\zbf),
    \end{align}
    where $\zbf=\mathrm{col}\left(\xbf, \ybf, \mathbf{\theta}, \zeta\right)$ and $\mathcal{Z}=\left\{(\xbf,\ybf,\theta,\zeta)\,|\,\xbf-(\Theta_A \ybf-\Theta_b)+G^\top \mathbf{\theta}=0 \,, \, G \xbf \le \mathbf{h} \,, \, \mathbf{\theta} \ge 0 \right\}$, with $\mathbf{\theta}$ and $\zeta$ being the dual variable associated with the feasible set $C$ and a fictitious variable, respectively. In addition, $g(\zbf)=\mu (\mathbf{1}^\top\ybf-\bar{\ybf})^2+\lambda \left\|\ybf\right\|^2 + \zeta \mathbf{\theta}^\top\mathbf{h}+\frac{1}{4} \left\|-A^\top \Lambda^\top \ybf - \xbf \right\|+ \frac{\zeta}{4} \left\|G^\top \mathbf{\theta} - \xbf\right\|^2$, and $p(\zbf)= \frac{1}{4} \left\|-A^\top \Lambda^\top \ybf + \xbf \right\|+ \frac{\zeta}{4} \left\|G^\top \mathbf{\theta} + \xbf\right\|^2$. \hfill $\blacksquare$
\end{proposition}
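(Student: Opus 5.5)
Via Lemma \ref{lemma1}, the lower-level response is a Euclidean projection onto the polyhedron $C=\{\xbf\,|\,G\xbf\le \mathbf{h}\}$. The plan is to swap this projection for its KKT system, keep the linear equality, primal feasibility and dual sign conditions in $\mathcal{Z}$, and treat the only nonconvex pieces by penalization and polarization. The first is the bilinear revenue term $-\ybf^\top\Lambda A\xbf$ in \eqref{Leader}. The second is the bilinear complementarity condition $\theta^\top(G\xbf-\mathbf{h})=0$.

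\textbf{Step 1 (KKT replacement).} $\xbf^*(\ybf)=\mathrm{proj}_C(\Theta_A\ybf-\Theta_b)$ is the unique minimizer of the strongly convex QP $\min_{\xbf}\frac12\|\xbf-(\Theta_A\ybf-\Theta_b)\|^2$ s.t. $G\xbf\le \mathbf{h}$. Its constraints are linear, so KKT is necessary and sufficient without any further constraint qualification. Hence $\xbf=\xbf^*(\ybf)$ iff there is $\theta\ge0$ with
\begin{align*}
\xbf-(\Theta_A\ybf-\Theta_b)+G^\top\theta=0,\quad G\xbf\le \mathbf{h},\quad \theta^\top(\mathbf{h}-G\xbf)=0.
\end{align*}
So \eqref{NonConvex} is equivalent to minimizing $J_0(\ybf,\xbf)$ over $(\xbf,\ybf,\theta)$ satisfying these conditions. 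The first three conditions define $\mathcal{Z}$; only complementarity remains.

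\textbf{Step 2 (penalizing complementarity).} On $\mathcal{Z}$ we have $\theta\ge0$ and $\mathbf{h}-G\xbf\ge0$, so $\theta^\top\mathbf{h}-\theta^\top G\xbf\ge0$, with equality iff complementarity holds. I would add $\zeta(\theta^\top\mathbf{h}-\theta^\top G\xbf)$ to the objective with the multiplier $\zeta$. Equivalence with the MPEC then holds in the exact-penalty sense: for $\zeta$ large enough, minimizers have zero complementarity residual. This is the standard exact penalty result for MPECs with linear complementarity constraints. Here $\zeta$ plays the role of the penalty parameter, fixed positive inside each DCP subproblem and increased as needed.

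\textbf{Step 3 (polarization into DC form).} For any vectors, $\abf^\top\bbf=\frac14\|\abf+\bbf\|^2-\frac14\|\abf-\bbf\|^2$. Applying this with $\abf=-A^\top\Lambda^\top\ybf$, $\bbf=\xbf$ gives
\begin{align*}
-\ybf^\top\Lambda A\xbf=\tfrac14\|-A^\top\Lambda^\top\ybf+\xbf\|^2-\tfrac14\|-A^\top\Lambda^\top\ybf-\xbf\|^2.
\end{align*}
Applying it with $\abf=G^\top\theta$, $\bbf=\xbf$ gives
\begin{align*}
-\theta^\top G\xbf=\tfrac14\|G^\top\theta-\xbf\|^2-\tfrac14\|G^\top\theta+\xbf\|^2.
\end{align*}
I would then collect terms into $g$ and $p$. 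Into $g$ go the convex quadratics $\mu(\mathbf{1}^\top\ybf-\bar y)^2+\lambda\|\ybf\|^2$, the linear term $\zeta\theta^\top\mathbf{h}$, and the two "minus" squares. Into $p$ go the two "plus" squares, with the sign bookkeeping adjusted so that each leftover square lands on the correct side. Both $g$ and $p$ are convex for fixed $\zeta\ge0$, being nonnegative combinations of squared norms of affine maps plus linear terms. The norms in the statement should be read as squared norms for the identity to hold.

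\textbf{Main obstacle.} I expect the main difficulty to be making "equivalently" rigorous in Step 2. If $\zeta$ is an optimization variable, then $\zeta\theta^\top\mathbf{h}$ and $\zeta\|\cdot\|^2$ are not jointly convex, so $\zeta$ must be treated as a fixed parameter. Equivalence then needs an exact-penalty argument: either local boundedness of $\theta$ at solutions, or the observation that feasible points with positive residual can be driven to zero residual with bounded cost, so a finite threshold $\bar\zeta$ exists. I would try to prove this by bounding the multipliers via the projection structure, using the fact that $\theta$ is uniquely determined on active faces when $G$ has linearly independent active rows. Otherwise I would fall back to stating equivalence for $\zeta$ above such a threshold.
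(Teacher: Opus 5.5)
\textbf{Route and Step~3.} The paper states Proposition~\ref{DCPRef} without proof, and your KKT--penalty--polarization route is evidently the construction behind $g$ and $p$. Step~1 is correct. Treating $\zeta$ as a fixed parameter is forced: as a component of $\zbf$ it makes $g$ nonconvex. Moreover, since $\mathcal{Z}$ leaves $\zeta$ free, $g-p$ would contain $\zeta\,\theta^\top(\mathbf{h}-G\xbf)$, which is unbounded below wherever the residual is positive. In Step~3, however, your own identity shows that the printed statement puts the revenue squares on the wrong sides. With $\frac{1}{4}\|-A^\top\Lambda^\top\ybf-\xbf\|^2$ in $g$ and $\frac{1}{4}\|-A^\top\Lambda^\top\ybf+\xbf\|^2$ in $p$, the difference $g-p$ contains $+\ybf^\top\Lambda A\xbf$, not $-\ybf^\top\Lambda A\xbf$. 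So ``the two minus squares go into $g$'' is wrong for the revenue term. You should say explicitly that you prove a corrected decomposition: squares restored and the two revenue squares swapped. The complementarity squares are placed correctly.

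\textbf{The gap in Step~2.} The exact-penalty result you invoke does not hold at this generality, so neither ``equivalently'' nor your fallback threshold $\bar\zeta$ is available. Write $q=\Theta_A\ybf-\Theta_b$. On $\mathcal{Z}$, $\xbf$ minimizes the Lagrangian $\frac{1}{2}\|\xbf-q\|^2+\theta^\top(G\xbf-\mathbf{h})$ of the projection QP. Hence $r=\theta^\top(\mathbf{h}-G\xbf)$ is exactly the duality gap, and strong convexity gives only $\|\xbf-\xbf^*(\ybf)\|\le\sqrt{2r}$. At a biactive index, $\xbf$ can move by $t$ at residual cost $t^2$ while $J_0$ can drop by order $t$.

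Here is a concrete instance in the paper's own class. Take $T=1$ and two scalar followers, with $C=\{\xbf\in\real^2\,|\,\xbf\le 0\}$, $A=(-1,2)$, $\Pi=\mathrm{diag}(2,1)$ (so $\Theta_A=(1,-1)^\top$), $\Theta_b=(1,-1)^\top$, $\Lambda=1$, $\mu=1$, $\lambda=\frac{1}{4}$ and $\bar y=2$. The reduced objective is $\frac{9}{4}y^2-5y+4$ for $y\le 1$ and $\frac{13}{4}y^2-6y+4$ for $y\ge 1$. Its unique minimizer is $y^*=1$ with $J^*=\frac{5}{4}$, and there both constraints are biactive. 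Yet $y=1$, $\theta=(t,0)$, $\xbf=(-t,0)$ lies in $\mathcal{Z}$ with $r=t^2$. Its penalized value is $\frac{5}{4}-t+\zeta t^2<J^*$ for $0<t<1/\zeta$, for every $\zeta$. Since $G=I$ here, multipliers are unique and bounded, so your proposed fix via multiplier bounds does not help; the obstruction is the quadratic growth of $r$.

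\textbf{What can be proved.} Unconditionally, for every $\zbf\in\mathcal{Z}$,
\[
J_0(\ybf,\xbf)+\zeta r\;\ge\;J_0(\ybf,\xbf^*(\ybf))-\|A^\top\Lambda^\top\ybf\|\sqrt{2r}+\zeta r\;\ge\;J_0(\ybf,\xbf^*(\ybf))-\frac{\|A^\top\Lambda^\top\ybf\|^2}{2\zeta}.
\]
Using $\lambda>0$ for coercivity, this gives equivalence of optimal values only as $\zeta\to\infty$. A finite exact threshold needs an added hypothesis, for example LICQ and strict complementarity of the projection's KKT system at the minimizers of \eqref{NonConvex}. Under that hypothesis $r$ grows locally linearly away from the complementarity set.
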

 In view of Proposition \ref{DCPRef}, Algorithm \ref{alg:DCP} can be read as the DCP algorithm applied to the problem in \eqref{DCPFunc}, where, at each iteration $t$, the concave part $-p(\zbf)$ is linearized, turning the problem into a convex optimization problem. Under standard assumptions, Algorithm \ref{alg:DCP} converges to a stationary point of the objective function, and the generated sequence $\left(g(\zbf_t)-p(\zbf_t)\right)_{t \in [t_{\max}]}$ is monotonically decreasing \cite[Theorem 3]{tao1997convex}.
\begin{algorithm}
\caption{Difference of Convex Programs \cite[Eq. (11)]{tao1997convex}} \label{alg:DCP}
\begin{algorithmic}[1]
   \Require Initial condition $\zbf_0$
   \While{$t \le t_{\max}$}
   \State Solve $\zbf_{t+1} \in \arg \min\limits_{\zbf \in \mathcal{Z} }\quad {g(\zbf)- \langle \nabla p(\zbf_t),\zbf\rangle}$
   \EndWhile
   \Ensure $\hat{\ybf}=\ybf_{t_{\max}}$
\end{algorithmic}
\end{algorithm}

In Algorithm \ref{alg:DCP}, the tuning of the stopping criterion $t_{\max}$ is crucial. Results from Theorem \ref{Surrogate} ensure that the estimated lower-level model \eqref{Param} is accurate only within a trust region $\mathcal{B}(\ybf_{K},\bar{\rho})$. Consequently, the iterates of Algorithm \ref{alg:DCP}, initialized at $\zbf_0 = \mathrm{col}\left(\ybf_{K}, \xbf_{K}, \lambdabf_0\right)$, are meaningful approximations of the original problem \eqref{Stackelberg} as long as they remain within this region. For a given $\bar{\rho}$, it is therefore essential to select $t^*_{\max} \geq 0$, such that the iterates of Algorithm \ref{alg:DCP} do not leave the trust region, i.e., $\left(\ybf_t\right)_{t \in [t^*_{\max}]} \subseteq \mathcal{B}(\bar{\rho}, \ybf_{K})$. However, explicitly characterizing this region is generally intractable. Thus, we propose to approximate $t^*_{\max}$ as the largest iteration index $t$ for which the sequence generated by Algorithm \ref{alg:DCP} yields a monotonic decrease of the true objective, i.e., $\forall\, t=1,\dots, t^*_{\max}$ it holds
\begin{align}\label{Crit}
    J_0(\ybf_t, x^*(\ybf_t)) < J_0(\ybf_{t-1}, x^*(\ybf_{t-1})).
\end{align}

This criterion serves as a practical proxy to ensure that the iterates remain within the region where the estimated model is reliable.
Yet, in practice, the DSO does not have direct access to $\xbf^*(\cdot)$ (which is based on the true parameters $(\Theta_A, \Theta_b)$), and thus cannot determine $t^*_{\max}$ a priori (without probing the end-users). Instead, it must empirically choose $t_{\max}$ and validate the resulting candidate solution $\hat{\ybf}$ by querying the prosumers and evaluating the resulting objective.

If the candidate solution $\hat{\ybf}$ provides a satisfactory reduction in the objective function $J_0$, then we let the DSO start a new round of Algorithm \ref{alg:ZOM} with $\hat{\ybf}$ as the initial condition. In the following remark, a criterion to evaluate the quality of the computed candidate point $\hat{\ybf}$ is provided.

\begin{remark}
Let us assume that Algorithm \ref{alg:ZOM}, initialized in $\ybf_0$, runs for $K_1$ iterations; the estimation step, followed by Algorithm \ref{alg:DCP}, returns $\hat{\ybf}$, and, by querying the end-users, the DSO evaluates $J_0(\hat{\ybf})$. By exploiting the convergence bound in \ref{Bound}, the DSO can derive the minimum number of iterations, $K_2$, of the next round of Algorithm~\ref{alg:ZOM} such that accepting the candidate point $\hat{\ybf}$ is beneficial for convergence. The associated adaptation rule is the following:
\begin{align}\label{AdaptiveN}
    \mathcal{U}(\tilde{J}_0(\hat{\ybf}), K_2, \bar{\eta}', \bar{\delta}')<\mathcal{U}(\tilde{J}_0(\ybf_0), K_1+K_2, \bar{\eta}, \bar{\delta}), 
\end{align}
where the left-hand side is the bound obtained by restarting Algorithm \ref{alg:ZOM} from the estimated $\hat{\ybf}$ with smaller $\eta_k, \delta_k$, while the right-hand side is the bound obtained by discarding $\hat{\ybf},$ and letting the current round of Algorithm \ref{alg:ZOM} run for $K_2$ more iterations. The value of $\beta$ can be chosen arbitrarily small, hence, the function evaluations $\tilde{J}_0(\hat{\ybf})$ and $\tilde{J}_0(\ybf_0)$ in \eqref{AdaptiveN} can be accurately approximated by $J_0(\hat{\ybf})$ and $J_0(\ybf_0)$ \cite[Proposition 2.1]{li2025new}.
\end{remark}
\section{Numerical results}\label{NumRes}
As a numerical test case, we consider the setup in \cite{cianchi2025two}, consisting of a EC where each energy user $i$ can be equipped with: \begin{itemize}
    \item A load $\xbf^\text{L}_i \in \real^T$, which is partially shiftable and flexible.
    \item A battery, which can be charged and discharged at rates of, respectively, $\xbf_i^\text{BAT}$ and $\abf^\text{BAT}_i + \ibf^\text{BAT}_i \in \real^T$. In particular, $\abf_i^\text{BAT}$ is the power used by the consumer for its own demand, and $\ibf_i^\text{BAT}$ is the power that is injected into the grid.
    \item An electric vehicle (EV), which is charged at a rate of $\xbf^\text{EV}_i \in \real^T$ during a specific time window  $[\underline{h}, ..., \bar{h}]$.
    \item A solar panel (PV), whose generated power can be used, ($\abf_i^\text{PV} \in \real^T$), injected, ($\ibf_i^\text{PV} \in \real^T$), or curtailed.
\end{itemize}

Let us denote the state of user $i$ as 
\begin{align}
    \xbf_i=\mathrm{col}\left(\xbf_i^\text{L}, \xbf_i^\text{BAT}, \xbf_i^\text{EV}, \abf_i^\text{BAT}, \abf_i^\text{PV}, \ibf_i^\text{BAT}, \ibf_i^\text{PV}\right),
\end{align}
and the total power offtake and injection of consumer $i$ are respectively formulated as:
\begin{equation}
\begin{aligned}
    &A_i \xbf_i = \xbf_i^\text{L} + \xbf_i^\text{BAT} + \xbf_i^\text{EV} - \abf_i^\text{BAT} - \abf_i^\text{PV},\\
    &\tilde{A}_i \xbf_i = \ibf_i^\text{BAT} + \ibf_i^\text{PV}.
\end{aligned}
\end{equation} 

The local feasibility set of each energy user, $\mathcal{X}_i$, includes the satisfaction of the demand and the operational constraints of each device, as in \cite[Eq. (11)]{cianchi2025two}. 

Furthermore, we consider that each energy user can specify (e.g., via a home energy management system) its behavioral preferences by setting suitable entries for the desired profile $\rbf_i$, and the weighting matrix $\Pi_i=\pi_iI_T$.
We run the simulations for $N=10$ energy users with real data and parameters as in \cite{cianchi2025two}. 

Moreover, we choose $\Lambda=I_{T}$, $\mu=1000$, $\lambda=50$. \textcolor{black}{Also, the optimal value $J^*$ of \eqref{NonConvex} is computed using the well-known nonlinear solver \texttt{Ipopt}\footnote{\url{https://coin-or.github.io/Ipopt/}}}.
\begin{figure}
        \includegraphics[width=\columnwidth]{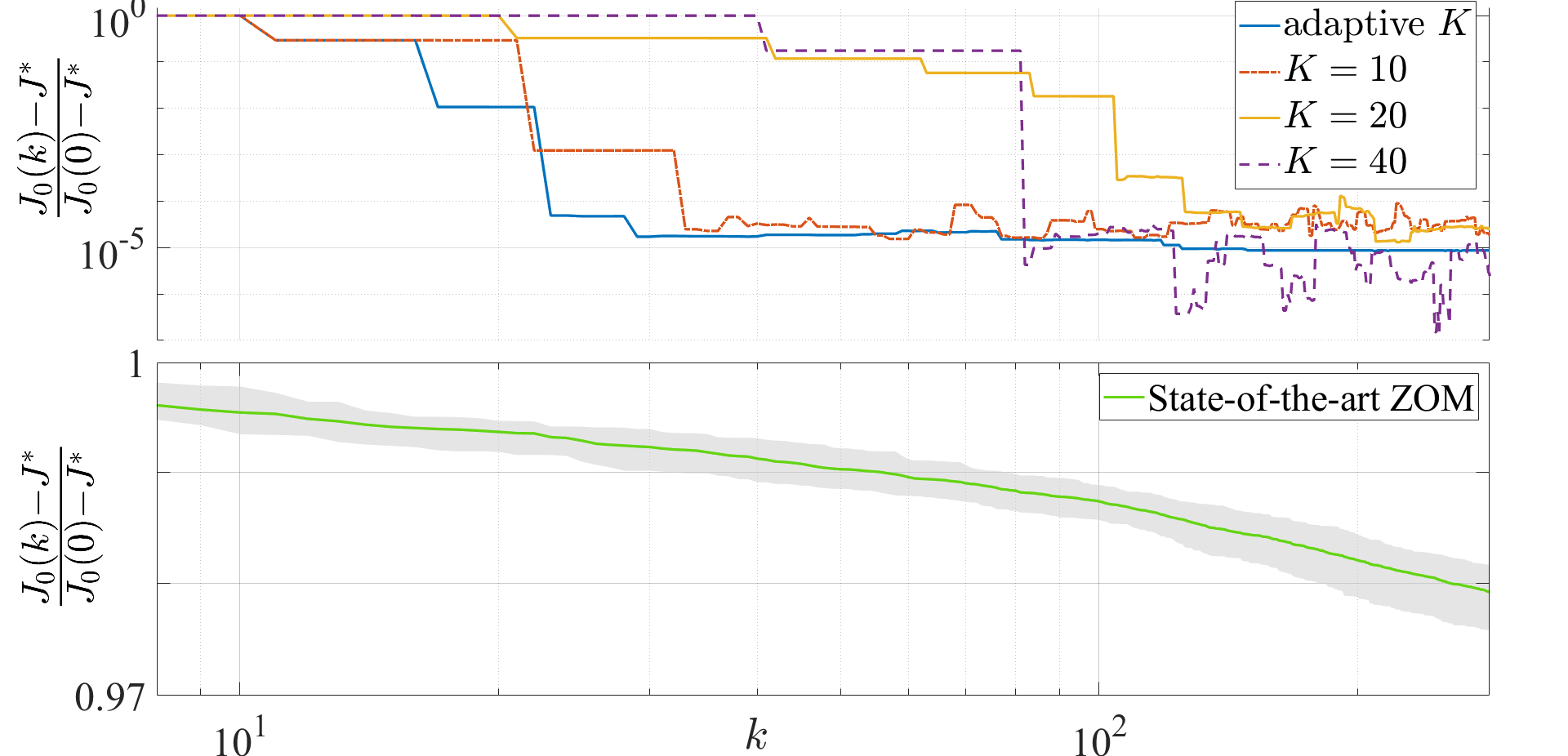}
        \captionsetup{width=\columnwidth}
        \caption{Convergence of the DSO objective, $J_0$, for different values of $K$, with the optimal $t^*_{\max}$, and the true feasible set, $C$. At the bottom, a comparison with Algorithm \ref{alg:ZOM}, when it is not assisted by the estimation of the user response.}
        \label{FIG1}
\end{figure}

\begin{figure}
        \includegraphics[width=\columnwidth]{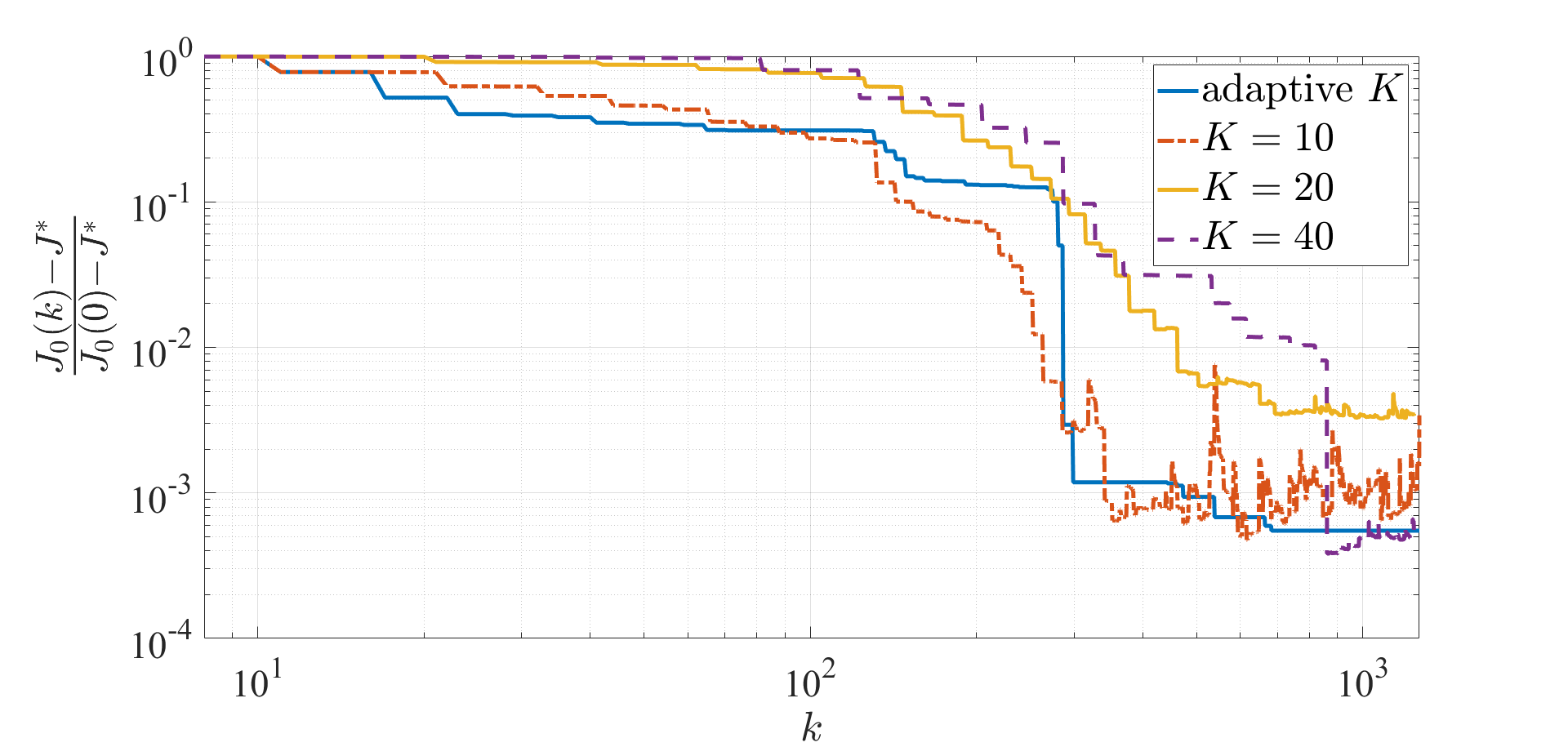}
        \captionsetup{width=\columnwidth}
        \caption{Convergence of the DSO objective, $J_0$, for different values of $K$, with the optimal $t^*_{\max}$, and learned feasible set.}\label{FIG2}
\end{figure}

\begin{figure}
        \includegraphics[width=\columnwidth]{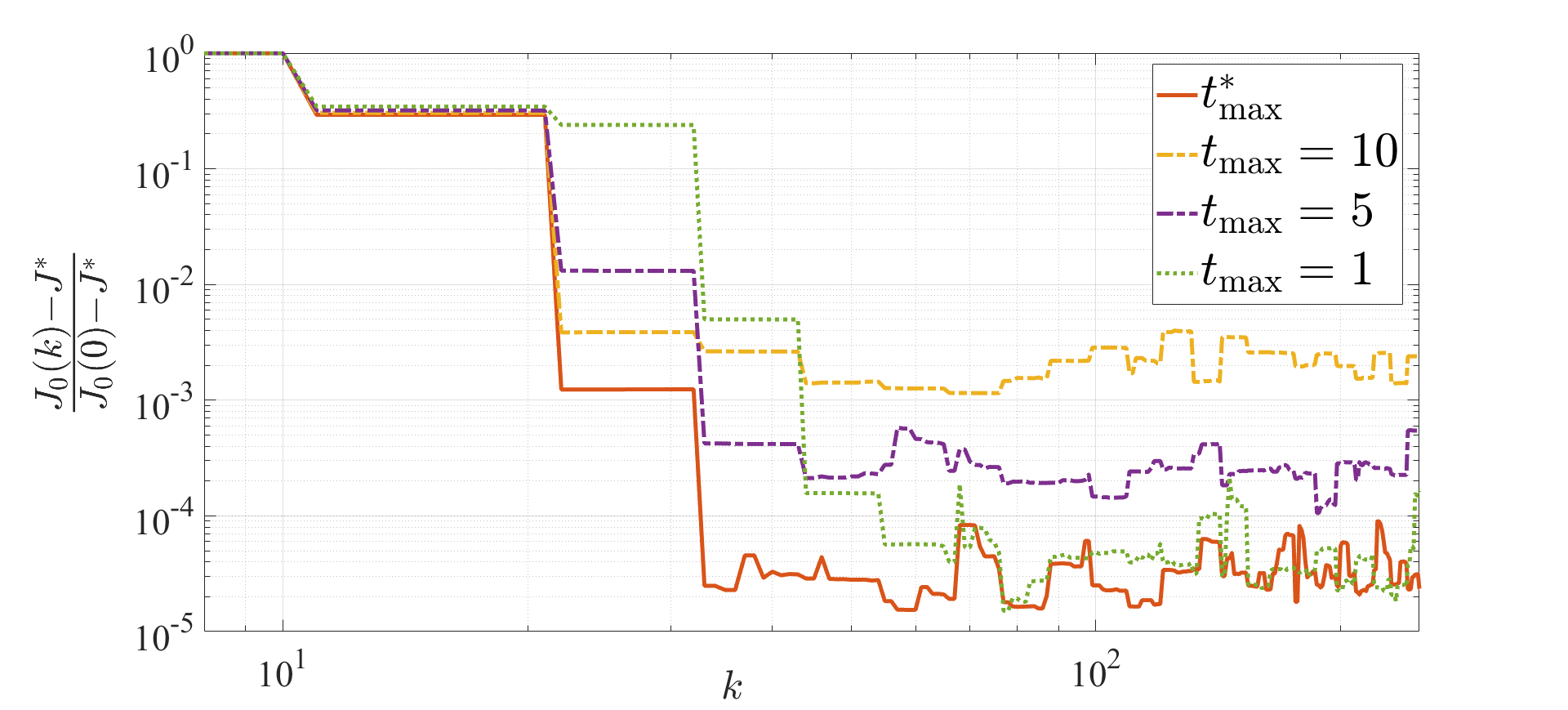}
        \captionsetup{width=\columnwidth}
        \caption{Convergence of the DSO objective, $J_0$, for different values of $t_{\max}$, with the true feasible set, $C$, and $K=10$.}
        \label{FIG3}
\end{figure}

We first run the simulations under the assumption that the DSO knows the feasible set of the EC, $C$, and the optimal number of iterations of Algorithm \ref{alg:DCP}, $t^*_\mathrm{max}$, computed as in \eqref{Crit}, which allows to maximally exploit the learned surrogate model within the trust region. In Fig. \ref{FIG1}, in the top figure, we compare the performance of our proposed method for different values of $K$ (i.e., the number of iterations at each round of Algorithm \ref{alg:ZOM}). In particular, we can see that the employment of the surrogate model remarkably accelerates the convergence, leading the normalized objective function of the DSO, $J_0$, to reach an error below $10^{-4}$ in around $200$ iterations under all scenarios. Specifically, the case of $K=40$ reaches the best accuracy in around $100$ iterations, as it largely benefits from the employment of the surrogate model and exhibits large jumps. The case of $K=10$ converges faster (in around $60$ iterations), as the jumps due to the estimation occur more often, even if they are smaller. Interestingly, the case of $K=20$ is the slowest. Finally, we notice that adapting $K$ based on the adapting rule in \eqref{AdaptiveN} leads to slower convergence due to the conservativeness of the bound. Moreover, we notice that, close to convergence, alternating zeroth-order method with estimation step with a fixed $K$ can induce oscillations, while using adaptive $K$ does not. Furthermore, the bottom figure in Fig. \ref{FIG1} shows the performance of Algorithm \ref{alg:ZOM}, when it is not assisted by the estimation of the user response, revealing a significant reduction in the number of required iterations between our proposed learning-based method and Algorithm \ref{alg:ZOM}. 

Next, we relax the assumption about full knowledge of the feasible set $C$ of the EC, defined in Lemma \ref{lemma1}. In particular, we assume that the DSO has access only to the matrix $G$, which only depends on the type and the number of assets with which the EC is equipped. As such, vector $\mathbf{h}$, containing individual parameters and private device specifications, is approximated from data, such that $
G \xbf_k \le \hat{\mathbf{h}} \quad \forall \, (\ybf_k, \xbf_k) \in \mathcal{D}.$ In Fig. \ref{FIG2}, the convergence behavior of our proposed method, using the approximation $\hat{\mathbf{h}}$ rather than the true one, is shown. We can observe that the method is slower, as it takes around $700$ iterations, and the reached accuracy slightly deteriorates to $10^{-3}$. Yet, we remark that this still quite outperforms the typical accuracy reached by Algorithm \ref{alg:ZOM}, when it is not assisted by the estimation of the user response. 

Finally, Fig. \ref{FIG3} shows the impact of properly tuning $t_\mathrm{max}$ in Algorithm \ref{alg:DCP} for the case with $K=10$, and true $\mathbf{h}$. We highlight that using a large value of $t_\mathrm{max}$, even if it might lead to larger jumps for early iterations, deteriorates the performance for later iterations. In fact, the case $t_\mathrm{max}=10$ cannot reach a level of accuracy below $10^{-3}$, since the effect of the surrogate model from the $800$-th iteration onward is disruptive, leading to upwards jumps (indicating that the iterations of Algorithm \ref{alg:DCP} violated the trust region) which cause divergence. On the other side, small values of $t_\mathrm{max}$, such as $t_\mathrm{max}=1$, by consistently leading to smaller jumps, guarantee that the trust region is always satisfied, which renders the method slower, but guarantees its convergence (with accuracy $10^{-4}$). 
\section{Conclusion}\label{Conclusion}
In this paper, a hybrid learning-based zeroth-order framework for the design of dynamic network tariffs in DSM is introduced. In departure from existing zeroth-order methods that treat the EC as a black box, we considered the case in which the DSO knows the parametric structure of the user reactions to a given price. As such, we proposed an approach to efficiently alternate zeroth-order updates with data-driven estimation of the lower-level response to improve the efficiency of the convergence to an equilibrium tariff. Our numerical experience, based on real-world consumption data, showed that this integration can significantly (i.e., several orders of magnitude) accelerate convergence compared to standard zeroth-order methods. In the context of DSM, this translates into a reduced number of interactions between the DSO and the EC, which is a key requirement for practical implementation.

\bibliographystyle{IEEEtran}
\bibliography{biblio}

@article{nesterov2017random,
  title={Random gradient-free minimization of convex functions},
  author={Nesterov, Yurii and Spokoiny, Vladimir},
  journal={Foundations of Computational Mathematics},
  volume={17},
  number={2},
  pages={527--566},
  year={2017},
  publisher={Springer}
}

@inproceedings{maheshwari2024follower,
  title={Follower agnostic learning in {S}tackelberg games},
  author={Maheshwari, Chinmay and Cheng, James and Sastry, Shankar and Ratliff, Lillian and Mazumdar, Eric},
  booktitle={63rd IEEE Conference on Decision and Control (CDC)},
  pages={222--228},
  year={2024},
  organization={IEEE}
}

@article{askeland2023stochastic,
  title={A stochastic {MPEC} approach for grid tariff design with demand-side flexibility},
  author={Askeland, Magnus and Burandt, Thorsten and Gabriel, Steven A},
  journal={Energy systems},
  volume={14},
  number={3},
  pages={707--729},
  year={2023},
  publisher={Springer}
}

@article{koolen2017machine,
  title={Machine learning for identifying demand patterns of home energy management systems with dynamic electricity pricing},
  author={Koolen, Derck and Sadat-Razavi, Navid and Ketter, Wolfgang},
  journal={Applied Sciences},
  volume={7},
  number={11},
  pages={1160},
  year={2017},
  publisher={MDPI}
}

@article{guo2019drivers,
  title={Drivers of domestic electricity users’ price responsiveness: A novel machine learning approach},
  author={Guo, Peiyang and Lam, Jacqueline CK and Li, Victor OK},
  journal={Applied energy},
  volume={235},
  pages={900--913},
  year={2019},
  publisher={Elsevier}
}

@book{luo1996mathematical,
  title={Mathematical programs with equilibrium constraints},
  author={Luo, Zhi-Quan and Pang, Jong-Shi and Ralph, Daniel},
  year={1996},
  publisher={Cambridge University Press}
}

@article{zhang2013robust,
  title={Robust energy management for microgrids with high-penetration renewables},
  author={Zhang, Yu and Gatsis, Nikolaos and Giannakis, Georgios B},
  journal={IEEE {T}ransactions on {S}ustainable {E}nergy},
  volume={4},
  number={4},
  pages={944--953},
  year={2013},
  publisher={IEEE}
}

@article{nguyen2017optimal,
  title={Optimal demand response and real-time pricing by a sequential distributed consensus-based ADMM approach},
  author={Nguyen, Dinh Hoa and Narikiyo, Tatsuo and Kawanishi, Michihiro},
  journal={IEEE Transactions on Smart Grid},
  volume={9},
  number={5},
  pages={4964--4974},
  year={2017},
  publisher={IEEE}
}

@article{hutchinson2024safe,
  title={Safe pricing mechanisms for distributed resource allocation with bandit feedback},
  author={Hutchinson, Spencer and Turan, Berkay and Alizadeh, Mahnoosh},
  journal={IEEE Transactions on Control of Network Systems},
  volume={11},
  number={4},
  pages={2010--2021},
  year={2024},
  publisher={IEEE}
}

@article{lu2018dynamic,
  title={A dynamic pricing demand response algorithm for smart grid: Reinforcement learning approach},
  author={Lu, Renzhi and Hong, Seung Ho and Zhang, Xiongfeng},
  journal={Applied {E}nergy},
  volume={220},
  pages={220--230},
  year={2018},
  publisher={Elsevier}
}

@INPROCEEDINGS{cianchi2025two,
  title={A two-part pricing mechanism for demand side management},
  author={Cianchi, Silvia and Sanjab, Anibal and Grammatico, Sergio},
  booktitle={11th IEEE International Conference on Control, Decision and Information Technologies (CoDIT)},
   year={2025},
  pages={2245--2250}
}

@ARTICLE{SWEDEN,
  author={Ruwaida, Yvonne and Chaves-Avila, José Pablo and Etherden, Nicholas and Gomez-Arriola, Ines and Gürses-Tran, Gonca and Kessels, Kris and Madina, Carlos and Sanjab, Anibal and Santos-Mugica, Maider and Trakas, Dimitris N. and Troncia, Matteo},
  journal={IEEE Transactions on Power Systems}, 
  title={{TSO-DSO}-Customer Coordination for Purchasing Flexibility System Services: Challenges and Lessons Learned from a Demonstration in {S}weden}, 
  year={2023},
  volume={38},
  number={2},
  pages={1883-1895}
}

@article{facchinei2003finite,
  title={Finite-Dimensional Variational Inequalities and Complementarity Problems},
  author={Facchinei, Francisco and Pang, JONG SHI},
  year={2007},
  publisher={Springer Science and Business
Media}
}

@article{grontas2024big,
  title={Big hype: Best intervention in games via distributed hypergradient descent},
  author={Grontas, Panagiotis D and Belgioioso, Giuseppe and Cenedese, Carlo and Fochesato, Marta and Lygeros, John and D{\"o}rfler, Florian},
  journal={IEEE Transactions on Automatic Control},
  volume={69},
  number={12},
  pages={8338--8353},
  year={2024},
  publisher={IEEE}
}

@INPROCEEDINGS{10886163,
  author={Liang, Zhirui and Li, Qi and Comden, Joshua and Bernstein, Andrey and Dvorkin, Yury},
  booktitle={63rd IEEE Conference on Decision and Control (CDC)}, 
  title={Learning with Adaptive Conservativeness for Distributionally Robust Optimization: Incentive Design for Voltage Regulation}, 
  year={2024},
  volume={},
  number={},
  pages={866-873},
  doi={10.1109/CDC56724.2024.10886163}}

@inproceedings{alahmed2024network,
  title={Network-aware and welfare-maximizing dynamic pricing for energy sharing},
  author={Alahmed, Ahmed S and Cavraro, Guido and Bernstein, Andrey and Tong, Lang},
  booktitle={63rd IEEE Conference on Decision and Control (CDC)},
  pages={859--865},
  year={2024},
  organization={IEEE}
}

@book{von2010market,
  title={Market structure and equilibrium},
  author={Von Stackelberg, Heinrich},
  year={1934},
  publisher={Springer}
}

@article{albadi2008summary,
  title={A summary of demand response in electricity markets},
  author={Albadi, Mohamed H and El-Saadany, Ehab F},
  journal={Electric power systems research},
  volume={78},
  number={11},
  pages={1989--1996},
  year={2008},
  publisher={Elsevier}
}

@INPROCEEDINGS{9993196,
  author={Fochesato, Marta and Cenedese, Carlo and Lygeros, John},
  booktitle={61st IEEE Conference on Decision and Control (CDC)}, 
  title={A {S}tackelberg game for incentive-based demand response in energy markets}, 
  year={2022},
  volume={},
  number={},
  pages={2487-2492},
  doi={10.1109/CDC51059.2022.9993196}}

@book{dempe2002foundations,
  title={Foundations of bilevel programming},
  author={Dempe, Stephan},
  year={2002},
  publisher={Springer}
}

@article{tao1997convex,
  title={Convex analysis approach to {DC} programming: theory, algorithms and applications},
  author={Tao, Pham Dinh and An, LT Hoai},
  journal={Acta mathematica vietnamica},
  volume={22},
  number={1},
  pages={289--355},
  year={1997}
}

@article{nesterov2005smooth,
  title={Smooth minimization of non-smooth functions},
  author={Nesterov, Yu},
  journal={Mathematical programming},
  volume={103},
  number={1},
  pages={127--152},
  year={2005},
  publisher={Springer}
}

@book{clarke1990optimization,
  title={Optimization and nonsmooth analysis},
  author={Clarke, Frank H},
  year={1990},
  publisher={SIAM}
}

@article{li2025new,
  title={New penalized stochastic gradient methods for linearly constrained strongly convex optimization},
  author={Li, Meng and Grigas, Paul and Atamt{\"u}rk, Alper},
  journal={Journal of Optimization Theory and Applications},
  volume={205},
  number={2},
  pages={29},
  year={2025},
  publisher={Springer}
}

\end{document}